
\documentclass{elsarticle}               
                             
\journal{Information Processing Letters}

\usepackage{\jobname}
\usepackage[linesnumbered,noline,ruled]{algorithm2e}

\newcommand{\rememberlines}{\xdef\rememberedlines{\number\value{AlgoLine}}}
\newcommand{\resumenumbering}{\setcounter{AlgoLine}{\rememberedlines}}

\begin{document}

\begin{frontmatter}
  \title{Verification and generation of unrefinable partitions}

  \author{Riccardo Aragona}
  \ead{riccardo.aragona@univaq.it}
  
  \author{Lorenzo Campioni}
  \ead{lorenzo.campioni1@graduate.univaq.it}
  
  \author{Roberto Civino}
  \ead{roberto.civino@univaq.it}
  \address{Dipartimento di Ingegneria e Scienze dell'Informazione e Matematica - Universit\`{a} dell'Aquila, Italy}  
  
  \author{Massimo Lauria}
  \ead{massimo.lauria@uniroma1.it}
  \address{Dipartimento di Scienze Statistiche - Sapienza Universit\`{a} di Roma, Italy}

  \begin{keyword}
    integer partitions into distinct part \sep minimal excludant \sep algorithms.
  \end{keyword}

\begin{abstract}
Unrefinable partitions are a subset of partitions into distinct parts which 
satisfy an additional unrefinability property. More precisely, being  an unrefinable partition means that none of the parts can be written as the sum of smaller integers 
without introducing a repetition.
 We address the algorithmic aspects of unrefinable partitions, such as testing whether  a given partition is unrefinable or not and 
 enumerating all the partitions whose sum is a given integer. We design two algorithms to solve the two mentioned problems and we discuss their complexity. 
\end{abstract}

\end{frontmatter}

\section{Introduction}
\label{sec:preliminaries}

Given $N \in \N$, a \introduceterm{partition} of $N$ is a finite sequence of 
positive integers $\aname_{1}, \aname_{2}, \dots, \aname_{k}$ such that
\begin{equation*}
  N = \aname_{1} + \aname_{2} + \cdots + \aname_{k}.
\end{equation*}
Each $\aname_{i}$ is called a \introduceterm{part} of the partition, 
and a partition into \introduceterm{distinct parts} is a partition
where all parts are distinct integers.
In this work we only discuss partitions into distinct parts and, among them, we
are  interested in the study of \introduceterm{unrefinable}
partitions,
where refining a partition means splitting any of its parts as the sum of smaller
pieces.
When the repetition of elements is allowed, the process of refining
a partition can proceed until one obtains
\[
  N = \underbrace{1 + 1 + \cdots +1}_{N \text{\ times\ }}
\] and for this
reason it is of little interest.

The problem becomes more interesting if we require the considered partitions, after the refinement, to still be  partitions into
distinct parts. Consider for example the following partition of $50$:
\[
  50 = 1 + 2 + 3 + 4 + 6 + 7 + 11 + 16.
\]
Is there a way to refine any
of the parts without introducing a repetition? In this case the answer is no, but how can this be tested and what effort is required?

There are well known algorithms and formulas to enumerate or count all
partitions of a given $N$~\cite{andrews1998theory}, but none is known, to our knowledge, for unrefinable partitions.
In order to close this gap,  we address in this paper some algorithmic aspects related to unrefinable partitions. We present two algorithms: one
which verifies whether a partition is unrefinable or not, and one  
  which recursively enumerates all unrefinable partition of a given $N$.
More precisely,
we  discuss a na\"ive $O(\ell^3)$-algorithm which determines if an increasing sequence of integers with maximum element equal to $\ell$  
represents an unrefinable partition and show that it can be improved by means of simple arithmetical arguments. We prove that the verification problem can be actually solved 
 in $O(\alength + \mu^{2})$ steps, where $\mu$ is the \emph{minimal excludant} of the sequence, i.e.\ the least integer that is not a part, which is defined in detail in the next section.
 Following Aragona et al.~\cite{aragona2021maximal} we have that $\ell$ and $\mu$
 are upper bounded by some functions in \(O(\sqrt{N})\), therefore the
 verification algorithm is linear in $N$ in the worst case.

\subsection{Related works}
The notion of unrefinable partition is at least as old as the OEIS entry A179009~\cite{OEIS} (due to David S.\ Newman in 2011) and has been formally introduced in a paper by Aragona et al.~\cite{aragona2021unrefinable}, where unrefinability 
appeared in a natural way in connection to some subgroups in a chain of normalizers~\cite{aragona2021rigid}. The authors proved that 
the generators of such subgroups are parametrized by some unrefinable partitions satisfying additional conditions on the minimal excludant.
Some first combinatorial equalities regarding unrefinable partitions for triangular and non-triangular numbers have been shown recently~\cite{aragona2021maximal,aragona2022number}. 
The notion of minimal excludant, which frequently appears also in combinatorial game theory~\cite{gurvich2012further,fraenkel2015harnessing}, has  been studied in the context of integers partition by other authors~\cite{andrews2019,Ballantine2020,Hopkins2022}.

\subsection{Organization of the paper}
The remainder of this document is arranged as follows. In Sec.~\ref{sec:nota} we introduce our notation and show some preliminary results. In Sec~\ref{sec:refinability} we present the verification algorithm (cf.~Algorithm~\ref{alg:simple}), prove its correctness and discuss its complexity (cf.\ Theorem~\ref{thm:main}). The enumerating algorithm (cf.\ Algorithms~\ref{alg:enum} and \ref{alg:final}) and the relative complexity analysis (cf.~Theorem~\ref{thm:enumeration}) are presented in~Sec~\ref{sec:enum}, which concludes the paper.

\section{Notation and preliminaries}
\label{sec:nota}
In this paper we use a non-conventional representation for
partitions into distinct parts.
Namely, together with the integers which belongs to the partition, we explicitly mark
all the missing parts up to a certain value.
Formally, we call
a \introduceterm{\ps} a sequence of integers
$\lambda = (v_i)_{i\geq 1}$ such that
\begin{enumerate}
\item \label{three} there exists  $\ell \geq 1$ such that  for $i > \ell$ we have $v_i = 0$,
\item  for each $1 \leq i \leq \ell$ we have that $\apart_{i}$ is either $i$ or $0$.
\end{enumerate}
Each null $v_i$ is displayed using the symbol $\star$,
and we denote for brevity $\lambda =(\aseq)$, where $\ell$ is as above, and $i$ is called the \emph{index} of $v_i$.  
Defining 
\[
\asum(\aname) = \sum_{i=1}^{\infty} v_i=  \sum_{\apart_{i}=i} v_i < \infty,
\] where the symbol $\star$ is treated as zero, any such
sequence naturally represents a partition of the integer $N=\asum(\aname)$ into distinct parts. Therefore,
when it is not ambiguous, we use the terms partition and \ps interchangeably.  
We call \emph{length} of  the \ps the integer $ \ell =|\aname|$.
If $x \leq |\aname|$ and $\apart_{x}$ is equal to $x$ then we say that
$x$ is a part of $\aname$, and say that $x \in \aname$. Otherwise we
say that $x \not\in \aname$.

The representation of a partition as a \ps is clearly not unique and this is exactly the reason why we need to introduce this formalism.
We use this representation to distinguish, e.g., that
$(1,2,\star,\star,5)$ is a prefix of
$(1,2,\star,\star,5,6,7,\star,9)$ while $(1,2,\star,\star,5,\star)$ is
not, even though they both represent the partition of $8$ with parts $1$,
$2$, and $5$.
This distinction will be useful while discussing the process of 
verifying  and enumerating partitions. Indeed, our algorithms will take \pss as inputs and will
\begin{enumerate}
  \item test whether a \ps is unrefinable,
  \item enumerate all unrefinable \pss of a given $N$ which does not end with $\star$.
\end{enumerate}

Our representation highlights the numbers that are not included among
the parts of the partitioned number $N$. We say that $x$ is
a \introduceterm{missing part} of $\aname$ when $x \leq |\aname|$ and the
$x$th coordinate of $\aname$ is  $\star$.\footnote{%
  Notice that an integer $x \not\in \aname$ is not called a missing
  part when $x > |\aname|$. }
The smallest missing part in $\aname$
(\ie the index of the leftmost $\star$ in the sequence) is called the
\introduceterm{minimal excludant} of $\aname$, and is denoted by
$\mex(\aname)$.
It is customary in the relevant literature to set $\mex(\aname)=0$ when there is no
such minimal excludant, \ie when $\aname$ does not have any $\star$.

For conveniency, we abuse notation and we define
$\aname \cup \{\star\}$ as the \ps obtained concatenating a $\star$
to $\aname$. In the same way, we denote $\aname \,\cup \{x\}$ as the
concatenation of $\aname$ with number $x$, the latter operation being well
defined only when $x=|\aname|+1$.

Let us now define the notion of refinability in the context of \pss.
\begin{definition}[{Refining}]
  \label{def:refine}
  Let us consider a \ps
  \begin{equation*}
    \aname = (\aseq).
  \end{equation*}
  A part $\apart_{r}$ is \introduceterm{refinable} when
  the equation $r = r_{1} + \cdots + r_{t}$ holds for some $t>1$,
  with $\apart_{r} = r$ and $\apart_{r_{1}}, \ldots, \apart_{r_{t}}$
  all equal to $\star$.
  Accordingly, the equation $r = r_{1} + \cdots + r_{t}$ is 
  a \introduceterm{refinement} of $\apart_{r}$.
  A part that has no refinement is called
  \introduceterm{unrefinable}.
  A partition $\aname$ is \introduceterm{refinable} if some of its
  part admits a refinement, and it is
  \introduceterm{unrefinable} otherwise.
\end{definition}

Since we are discussing algorithmic matters, it is a legitimate concern
whether the length of a \ps might be longer than the length of a simple list
of parts (missing parts excluded). %
However it turns out that, if representing unrefinable partitions, their sizes are actually similar, as  
 the following  known lemma~\cite{aragona2021maximal} and its corollary show.

\begin{lemma}[\cite{aragona2021maximal}]
  Let $\aname=(\aseq)$ be a \ps with $\alast=\alength$ representing an unrefinable partition.
  Then the number of missing parts in $\aname$ is at most
  $\lfloor {\alength}/{2} \rfloor$.
\end{lemma}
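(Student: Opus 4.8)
The plan is to prove the contrapositive-flavored bound directly: I would show that if $\aname$ is unrefinable with $\alast = \alength$, then the missing parts cannot be ``too dense,'' and in fact each missing part can be charged to a distinct present part in such a way that missing parts occupy at most half of the indices $1, \ldots, \alength$.

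\textbf{Main idea: pairing missing parts with larger present parts.}
First I would observe that the minimal excludant interacts strongly with unrefinability. If $\mu = \mex(\aname)$ is the smallest missing part, then no part $r$ in the partition can be written as a sum of distinct missing parts; in particular a present part $r \geq \mu$ that is large enough could be refined using $\mu$ together with other missing parts unless the missing parts are scarce. The cleanest route, however, is a local pairing argument. For each missing part $x$ (an index with $\apart_x = \star$ and $x \le \alength$), I would try to associate the index $x$ with a present part strictly larger than $x$, showing these present parts are all distinct. Since $\alast = \alength$ guarantees that the top index $\alength$ itself is present, this anchors the pairing on the right end of the sequence.

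\textbf{Key steps, in order.}
First, set up the formalism: let $M = \{x \le \alength : \apart_x = \star\}$ be the set of missing parts and let $P = \{x \le \alength : \apart_x = x\}$ be the present parts, so $|M| + |P| = \alength$. The goal is $|M| \le \lfloor \alength/2 \rfloor$, equivalently $|M| \le |P|$ (with the parity/floor handled by noting $\alength$ is present, so $\alength \in P$). Second, I would define an injection $\phi \colon M \hookrightarrow P$. The natural candidate is to map a missing part $x$ to some present part in the range $(x, \alength]$; the unrefinability hypothesis should forbid the configuration where two consecutive missing parts have no present part to absorb them, because a present part could be split as a sum involving those missing indices. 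Third, I would verify injectivity of $\phi$ and conclude $|M| \le |P|$, hence $2|M| \le \alength$ and $|M| \le \lfloor \alength/2 \rfloor$.

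\textbf{Where the difficulty lies.}
The hard part will be constructing the injection $\phi$ so that unrefinability is actually used, and used correctly. The subtlety is that ``$x$ missing'' alone says nothing; what unrefinability controls is which \emph{present} parts admit a decomposition into missing parts. So the real work is to argue that the presence of many small missing parts would force some present part $r$ to be refinable as $r = r_1 + \cdots + r_t$ with all $r_j$ missing — for instance, if missing parts clustered densely below some index, a present part just above could be expressed as a sum of two of them. I expect the crux to be a careful case analysis distinguishing a single missing part (which can always be paired with a nearby present part, since isolated gaps are harmless) from runs of consecutive missing parts (where the refinability of a flanking present part yields the needed contradiction or pairing). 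Once the pairing rule is pinned down, injectivity and the counting conclusion should follow by routine bookkeeping.
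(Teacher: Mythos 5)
Your proposal is a plan rather than a proof: the injection $\phi$, which you correctly identify as the entire content of the lemma, is never constructed — you defer it ("I expect the crux to be a careful case analysis\ldots"), so the argument has a hole exactly where the work is. Worse, the candidate you sketch — send each missing part $x$ to a present part in the range $(x,\ell\,]$ — cannot be made injective in general. Take $\lambda = (1,2,3,\star,\star,6)$: this is unrefinable (the only sum of two distinct missing parts is $4+5=9>6$), yet both missing parts $4$ and $5$ exceed $\ell/2$, so the only present part larger than either of them is $6$, and your rule collides. Your guiding heuristic for where unrefinability bites is also off: two missing parts clustered just below a present part $r$ sum to roughly $2r$, not $r$, so dense runs of $\star$'s do \emph{not} make the flanking part refinable, and the local single-gap/run-of-gaps case analysis you anticipate does not engage the hypothesis at all.

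The standard argument (the paper cites this lemma from Aragona et al.\ without reproving it) uses the hypothesis $v_\ell=\ell$ not merely to "anchor" a pairing but as the part being refined: for any missing $x$ with $x \neq \ell - x$, the index $\ell - x$ cannot also be missing, since otherwise $\ell = x + (\ell - x)$ would be a refinement of the part $\ell$. Hence $x \mapsto \ell - x$ maps the missing parts, except possibly $x=\ell/2$ when $\ell$ is even, injectively into the present parts; moreover every image is $< \ell$, while $\ell$ itself is present, so $|M|-1 \leq |P|-1$ in all cases, giving $|M| \leq |P|$ and, with $|M|+|P|=\ell$, the bound $|M| \leq \lfloor \ell/2 \rfloor$. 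Note that this map sends large missing parts to \emph{small} present parts — the opposite of your requirement $\phi(x)>x$ — which is precisely why your version breaks. Your setup ($M$, $P$, $|M|+|P|=\ell$, reduce to $|M|\leq|P|$ via an injection) is the right frame, but without the complementary pairing $x \mapsto \ell - x$ the proof is missing its one essential idea.
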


\begin{corollary}
  \label{cor:maximal}
  Let $\aname=(\aseq)$, with
  $\alast=\alength$, be  a \ps representing an unrefinable partition and let $k$ be the number of parts in $\aname$. Then
  \begin{equation*}
    k \leq \alength \leq 2k.
  \end{equation*}
\end{corollary}
\begin{proof}
  Let $k$ and $m$ be respectively the number of parts and missing parts
  in $\aname$.
  A \ps of length $\alength$ can have at most $\alength$ parts,
  therefore $k \leq \alength$.
  For the second inequality we have that $m$ is at most
  $\lfloor {\alength}/{2} \rfloor$ by the previous
  lemma, and furthermore that $\alength=m+k$. Hence
  $\left\lceil \alength/2 \right\rceil \leq k$, which implies
  $\ell \leq 2k$.
\end{proof}

At first glance it may seem that checking  refinability for a partition $\aname$
should be computationally expensive, since, according to Definition~\ref{def:refine}, we
potentially need to check all sums of two or more indexes that
corresponds to $\star$ in $\aname$.
However, it is not hard to  realize that if a $\aname$
is refinable, then there exists a part $r$ with some refinement of the form
$r = a + b$.

\begin{proposition}\label{proposition:simplerefinement}
  If a partition $\aname$ has some refinement, then its smallest
  refinable part $r$ has a refinement of the form $r = a + b$.
\end{proposition}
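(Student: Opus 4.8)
The plan is to exploit a double minimality. First I would invoke the hypothesis that $\aname$ is refinable to conclude that the set of refinable parts is non‑empty, and let $r$ be its smallest element. Among all refinements of $r$ into (necessarily distinct, since repetitions are forbidden) missing parts, I would then fix one with the fewest summands, say
\[
  r = r_{1} + \cdots + r_{t}, \qquad r_{1} < r_{2} < \cdots < r_{t}, \quad t \geq 2 .
\]
It suffices to prove that this minimal $t$ equals $2$, which I would do by contradiction: assuming $t \geq 3$, I would produce a shorter refinement of $r$ by \emph{merging its two largest pieces}.

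Concretely, I would set $c = r_{t-1} + r_{t}$ and consider the candidate $r = r_{1} + \cdots + r_{t-2} + c$. The first, routine observation is the range bound $c < r \leq |\aname|$: since $t \geq 3$ the merge leaves at least the positive summand $r_{1}$ outside, so $c = r - (r_{1} + \cdots + r_{t-2}) < r$, and $r \leq |\aname|$ because $r$ is a part. Hence $c$ is a legitimate index of $\aname$ and is therefore either a part or a missing part, and the whole argument turns on this case distinction.

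If $c$ were a part, then $c = r_{t-1} + r_{t}$ would already be a two‑summand refinement of $c$, with $r_{t-1}$ and $r_{t}$ both missing; this would make $c$ a refinable part strictly smaller than $r$, contradicting the minimality of $r$. So $c$ must be a missing part. It then remains to check that $r_{1}, \ldots, r_{t-2}, c$ are pairwise distinct missing parts: each $r_{i}$ is missing by hypothesis, $c$ is missing by the previous step, and distinctness is automatic because $c = r_{t-1} + r_{t} > r_{t}$ is strictly larger than every surviving piece $r_{1}, \ldots, r_{t-2}$, so no collision can occur. This exhibits a refinement of $r$ with only $t-1$ summands, contradicting the minimality of $t$. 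Therefore $t = 2$ and $r = r_{1} + r_{2}$ is the sought refinement of the form $r = a + b$.

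The delicate point — and the reason the argument is arranged this way — is to preserve, after the merge, both invariants that make the shorter expression a genuine refinement: the new pieces must stay \emph{distinct} and must stay \emph{missing}. Distinctness is obtained for free precisely by merging the two \emph{largest} summands, since the result dominates all the others; merging, say, the two smallest could produce a value equal to a surviving piece and break distinctness. The missing‑part invariant, in turn, is exactly what the minimality of $r$ supplies, through the remark that any intermediate sum which happened to be an actual part would itself be a refinable part below $r$. I expect this interplay of the two minimalities (of the part $r$ and of the number of summands $t$) to be the crux; the rest reduces to the one‑line range check $c < r \leq |\aname|$.
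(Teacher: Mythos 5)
Your proof is correct and rests on the same key idea as the paper's proof: merge summands of a refinement and use the minimality of $r$ to conclude that the merged value must be missing, since otherwise it would itself be a refinable part strictly smaller than $r$. The paper performs the merge in one shot---taking $a=\nu_{1}$ and $b=\nu_{2}+\cdots+\nu_{t}$ directly---whereas you merge the two largest pieces and descend on the minimal number of summands $t$; this is the same argument arranged as a descent, with your additional checks (the range bound $c < r \leq |\aname|$ and the distinctness of the new pieces) making explicit details the paper leaves implicit.
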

\begin{proof}
  Let $r$ be the smallest refinable part for which there exists some refinement
  $r = \nu_{1} + \cdots + \nu_{t}$. If $t=2$ there is nothing to prove. Otherwise, let us
  fix $a=\nu_{1}$ and $b=\nu_{2} + \cdots + \nu_{t}$.
  If $b \in \aname$, then $b=\nu_{2} + \cdots + \nu_{t}$ would
  be a refinement itself, but $b < r$ and this would violate
  the minimality of $r$, hence $b$ is not a part of $\aname$.
  This shows that $r = a + b$ is indeed a refinement of
  $\aname$.
\end{proof}

From Proposition~\ref{proposition:simplerefinement}
it is easy to obtain a polynomial algorithm that checks refinability of a \ps
$\aname$ in time $O(\alength^{3})$: for every part of $\aname$, test
whether it is the sum of two smaller missing parts.
We will show in the next section how this algorithm can be improved.

\section{A faster algorithm to check refinability}
\label{sec:refinability}

In this section we introduce Algorithm~\ref{alg:simple} to check
refinability of \pss. This algorithm is faster compared to the na\"ive
one discussed above.
Our improvement comes from the key observation that whenever $a$ and
$b$ are missing elements in an unrefinable partition $\aname$, then
for any two integers $x,y >0$ we have that $x a + y b$ cannot be
in $\aname$.
This leads to the following idea: once we know $\mu=\mex(\aname)$, if
we find out that $x$ is another missing part, then none of $x+\mu, x +2\mu, x+3\mu,
\ldots$ can be parts of $\aname$, unless $\aname$ is refinable.
Hence, for each $0 \leq j < \mu$ we just need to keep trace of the
first missing part (greater than $\mu$) that is equal to
$j \pmod{\mu}$ to completely characterize all parts that would
violate unrefinability.

\begin{example}
Assume we want to check whether a sequence \(\aname\) is refinable and
that $\aname$ has \((1,2,3,\star,5,6,\star,8,9, \star)\) as prefix.
We know that \(\mex(\aname)=4\), and since \(7 \not\in \aname\),
the numbers \(\{11,15,19,\ldots\}\) cannot be parts of \(\aname\) unless it
is refinable. Similarly, integers as \(\{14,18,22,\ldots\}\) are forbidden
as well because \(10 \not\in \aname\).
The integer \(17\) is forbidden too, because it is \(7 + 10\), and
therefore also \(\{21,25,29, \ldots\}\) are
forbidden.
Essentially, after reading the first \(10\) elements of \(\aname\),
we already know that \(7+4t\) cannot be in \(\aname\) for all
\(t \geq 1\),
unless the sequence
is refinable, i.e.\ that every integer larger than $7$ which belongs to the residue class
of 3 modulo $\mu=4$ would violate unrefinability.
The same for \( 10+4t \) for \(t \geq 1\) and for \(17+4t\) for \(t \geq 0\).
The distinction between the case  \(t \geq 0\) and  \(t \geq 1\) will be clear in the next paragraph.
\end{example}

The algorithm that we are about to present scans the values of $\aseq$
from index $\mu+1$ to $\alength$, while maintaining the
information about which numbers in each residue class modulo $\mu$ are
\introduceterm{forbidden}, \ie are either known missing parts or
violate unrefinability if met later when scanning the sequence.
 
More precisely, this is accomplished by  defining 
$\mu$ counters $p_j$ for each residue class $0 \leq j < \mu$ modulo $\mu$. We initially set 
  $p_{j}:=\infty$, meaning  there is no forbidden number in the
residue class $j$.
Going from $\mu+1$ to $\alength$, the values of each $p_{j}$ is
updated every time a missing part is met in the \ps.
The invariant is when we reach position $\apart_{t}$ in the sequence,
the values
\begin{equation*}
p_{j} \quad p_{j} + \mu \quad p_{j} + 2\mu \quad p_{j} + 3\mu \quad \ldots  
\end{equation*}
are all forbidden in any unrefinable \ps starting with
the same prefix $\apart_{1}, \ldots, \apart_{t}$.
This is indeed enough: we can update $p_{j}$s so that when
the scan reaches $\alast$ without meeting any forbidden value, then
$\aname$ is unrefinable.
  Algorithm~\ref{alg:simple} proceeds as follows. It starts by finding the minimal excludant $\mu$,
  and 
  if none exists, then the partition is obviously unrefinable.
  Then it checks all integers from index $\mu+1$ to $\alast$ in order.
  For a given number $r$ in this sequence there are two possibilities:
  \begin{itemize}
    \item $r \in \aname$ and therefore we need to check if it has
    a refinement;
    \item $r \not\in \aname$ and then we update our knowledge of which
    numbers would contradict refinability, if met.
  \end{itemize}
  In the second case, such knowledge is represented by the numbers
  $p_{0}, \ldots, p_{\mu-1}$ which are updated at each iteration of
  the main loop of the algorithm. If $p_{j}$ is finite, then it is
  equal to $j$ modulo \(\mu\).
  
  \begin{algorithm}
\DontPrintSemicolon
\SetKwInOut{Input}{Input}\SetKwInOut{Output}{Returns}
\SetKw{Continue}{continue}
\Input{$\aname=(\aseq)$}
\Output{{\sc Refinable} or {\sc Unrefinable}}
\BlankLine
$\mu \leftarrow \mex(\aname)$\;
\lIf{$\mu = 0$} {\Return {\sc Unrefinable}}
$\vec{p}=(p_0, \ldots, p_{\mu-1}) \leftarrow (\infty, \infty,\ldots,\infty)$\;
\For{$r$ in $(\mu+1), \ldots, \alength$}{\label{alg:mainloop}%
  $j \leftarrow r \pmod{\mu}$\;

  \lIf{$v_{r} = r$ and $v_r \geq p_{j}$}
  {\Return {\sc Refinable}\label{alg:returnRef}}

  \lIf{$v_{r} = \star\quad$}
  {$\vec{p} \leftarrow$\textsc{Update}$(\vec{p},r)$}
}
\Return {\sc Unrefinable}
\caption{\label{alg:simple}\textsc{Verify} (an algorithm to check refinability)}
\rememberlines
\end{algorithm}

  \begin{algorithm}
  \resumenumbering
\DontPrintSemicolon
\SetKwInOut{Input}{Input}\SetKwInOut{Output}{Returns}
\SetKw{Continue}{continue}
\Input{$\vec{p}=(p_0,\ldots,p_{\mu-1})$, $r$ a newly discovered
  missing part}
\Output{$\vec{p}=(p_0,\ldots,p_{\mu-1})$, updated}
\BlankLine
$j \leftarrow r \pmod{\mu}$\;
\eIf{$r > p_{j}$\label{alg:startupdateiter}}{\label{alg:test1}%
  $t \leftarrow r + p_{j} \pmod{\mu}$\;
  $p_{t} \leftarrow \min(p_{t},r+p_{j})$\label{alg:setpt1}\;
}{%
$p_j \leftarrow r$\label{alg:setpj}\;
\For{$j'$ in $\{1, \ldots, \mu-1 \}\setminus{\{j\}}$}{%
$t \leftarrow j + j' \pmod{\mu}$\;
$p_{t} \leftarrow \min(p_{t},p_{j} + p_{j'})$\label{alg:setpt2}\;
}
}
\Return {$(p_0, \ldots, p_{\mu-1})$}
\caption{\label{alg:update}\textsc{Update} (improves $p_{j}$s after
  a new missing part $r$ is discovered)}
\end{algorithm}

    Algorithm~\ref{alg:simple} uses a subroutine called \textsc{Update} (cf.~Algorithm~\ref{alg:update}).
     Once we discover a new missing part
  \(r\), two different circumstances can occur, and they are addressed accordingly by \textsc{Update}. Precisely,  either \(r\) is
  not the smallest missing part in its residue class, and in this case
  we just need to see how \(r\) interacts with smaller missing parts
  in the same residue class (\textbf{if} branch);  or \(r\) is the
  smallest missing part in its residue class, and then we need to
  check how this influences all other missing parts (\textbf{else} branch). We give examples
  for both cases, precisely Example~\ref{exa:else} for the \textbf{else} branch 
  and Example~\ref{exa:if} for the \textbf{if} branch.
  
   \begin{example}\label{exa:else}
    Let
    \(\aname = (1,2,3,4,5,\star,7,8,\star,\star,11,12,13,\star,
    \ldots)\) and consider all calls to \textsc{Update} when
    Algorithm~\ref{alg:simple} reaches \(14\) in $\aname$.
    We have \(\mu=6\).
    The first call sets \(p_{3}=9\). The second call sets
    \(p_{4}=10\), and since \(19=9+10\), we need \(19\) to be forbidden as
    well. This happens in the \textbf{for} loop that sets
    \(p_{1}=19\). At this stage we have $(p_0,p_1,p_2,p_3,p_4,p_5) = (\infty,19,\infty,9,10,\infty)$.
    The third call happens when the scan reaches \(14\). Here the algorithm sets
    \(p_{2}=14\), and afterward the \textbf{for} loop in
    line~\ref{alg:setpj} computes the forbidden values
    \[
      19 + 14 = 33, \quad 9 + 14 = 23, \quad 10 + 14 = 24.
    \]
    The information that \(33\) is forbidden is included in \(p_{3}\) 
    (previously set to \(9\)), while the information \(p_{5}=23\) and
    \(p_{0}=24\) is newly determined.
    When $14$ is reached and processed, the information on forbidden numbers is represented by \[(p_0,p_1,p_2,p_3,p_4,p_5) = (24,19,14,9,10,23).\]
    
    The partition $\lambda$ may continue either with $15$ or with $\star$. Notice that $15 = 3 \pmod 6$ and $15>p_3=9$, therefore $15 \in \lambda$ would 
    prove refinability (indeed $15=6+9$). Therefore $\lambda$ can only continue with $\star$.
  \end{example}

  \begin{example}\label{exa:if}
    Let
    \(\aname = (1,2,3,\star,5,6,\star,8,9, \star,\star,12,13,\star,
    \ldots)\) and consider the call to \textsc{Update} when
    Algorithm~\ref{alg:simple} reaches \(14\) in
    \(\aname\). We have \(\mu=4\).
    By the time the algorithm scans position \(14\) we know that the
    sequence misses parts \(10\) and \(14\), therefore \(24\) must be
    forbidden as well. 
    Indeed in this call we have \(r=14\) and \(p_{2}=10\), and
    line~\ref{alg:setpt1} runs and sets \(p_{0}\) to \(24\) as desired.
  \end{example}

  In the rest of the section we discuss the correctness and complexity
  of Algorithm~\ref{alg:simple}. First we prove the correctness of the
  algorithm on unrefinable and refinable \pss separately, then we
  discuss its complexity.

  \begin{lemma}
    \label{lmm:corr_unrefinable}
    Algorithm~\ref{alg:simple} outputs \textsc{Unrefinable} on every
    unrefinable \(\aname\).
  \end{lemma}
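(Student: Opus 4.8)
The plan is to prove that if $\aname$ is unrefinable, then Algorithm~\ref{alg:simple} never reaches line~\ref{alg:returnRef} and hence correctly returns \textsc{Unrefinable}. The key is to establish the loop invariant claimed in the text: when the scan reaches position $t$, for every residue class $0 \leq j < \mu$, the value $p_j$ (together with $p_j + \mu, p_j + 2\mu, \ldots$) records a number that is \emph{forbidden}, meaning it cannot be a part of any unrefinable partition sharing the prefix $\apart_1, \ldots, \apart_t$. I would phrase the invariant precisely as: whenever $p_j$ is finite, it equals $j \pmod \mu$ and there exist missing parts $a, b \in \{\mu\} \cup \{\text{missing parts seen so far}\}$ with $a + b = p_j$ (or $p_j$ is itself a missing part obtained as $\mu$ plus a known missing part, etc.), so that $p_j$ is a sum of missing parts. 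Granting this, if the algorithm ever returned \textsc{Refinable} at line~\ref{alg:returnRef}, it would be because some part $v_r = r$ satisfies $r \geq p_j$ with $r \equiv p_j \equiv j \pmod \mu$; then $r = p_j + s\mu$ for some $s \geq 0$, and since both $p_j$ and $\mu$ decompose into missing parts, $r$ would be a sum of missing parts, giving a genuine refinement of $r$ and contradicting unrefinability.

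First I would set up the formal statement of the invariant and prove it by induction on the iterations of the main loop, with the base case being the initialization $\vec{p} = (\infty, \ldots, \infty)$ (vacuously true since no $p_j$ is finite) together with the fact that $\mu$ itself is missing. The inductive step requires analyzing both branches of \textsc{Update}. In the \textbf{else} branch, when the newly discovered missing part $r$ is the smallest in its class, I set $p_j \leftarrow r$ (directly a missing part, hence forbidden) and then, for each other class $j'$ with a finite $p_{j'}$, I record $p_{j'} + r$ in class $t = j + j' \pmod \mu$; since $p_{j'}$ is a sum of missing parts by induction and $r$ is missing, $p_{j'} + r$ is also a sum of missing parts. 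In the \textbf{if} branch, $r$ is not minimal in its class, so I combine $r$ (missing) with the already-recorded minimal value $p_j$ to forbid $r + p_j$ in the appropriate class, again preserving the ``sum of missing parts'' property.

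The subtle point, and what I expect to be the main obstacle, is verifying that the $p_j$ counters capture \emph{every} forbidden value that could appear as a part later in the scan, not merely some of them. In particular I must argue that it suffices to track, for each residue class, the single \emph{smallest} forbidden value $p_j$: because any later part $r' \equiv j \pmod \mu$ with $r' \geq p_j$ is automatically $p_j$ plus a nonnegative multiple of $\mu$, and $\mu$ is missing, so $r'$ inherits a refinement. This means the minimum is the only quantity we need, which is exactly why taking $\min$ in lines~\ref{alg:setpt1} and~\ref{alg:setpt2} is correct. I would also need to confirm that restricting the \textbf{for} loop to $j' \in \{1, \ldots, \mu-1\} \setminus \{j\}$ loses nothing: pairing with class $0$ corresponds to adding a multiple of $\mu$ (handled by the arithmetic-progression structure of each $p_j$), and pairing a missing part with itself or with $\mu$ is likewise subsumed. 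Once these coverage claims are nailed down, the conclusion that no finite $p_j$ is ever matched by an actual part $v_r = r$ on an unrefinable input follows immediately, completing the proof.
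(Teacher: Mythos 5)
Your skeleton is the same as the paper's (induct over the iterations of the main loop to show that every finite $p_j$ is genuinely excluded, then conclude that line~\ref{alg:returnRef} can never fire on an unrefinable input), but your certificate of forbiddenness has a real gap. You argue that if $\apart_r = r \geq p_j$ then $r = p_j + s\mu$ ``would be a sum of missing parts, giving a genuine refinement.'' A refinement in the sense of Definition~\ref{def:refine} must, by the intended semantics of the paper (refining without introducing a repetition), have pairwise \emph{distinct} summands, and your decomposition of $p_j + s\mu$ contains $s$ copies of $\mu$ as soon as $s \geq 2$. A sum of missing parts \emph{with multiplicity} does not certify refinability: the \ps $(1,2,\star,4,5,6)$ is unrefinable even though its part $6 = 3+3$ is a sum of missing parts, and likewise $(1,2,\star,4,5,6,\star,8,9,\star,11,12,\star,14)$ is unrefinable although $14 = 7+7$. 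So your final inference establishes the contradiction only for $s \leq 1$, and the phrase ``$r'$ inherits a refinement'' in your last paragraph, which is where the $s \geq 2$ case would have to be handled, is asserted rather than proved. Note that distinctness is not a pedantic quibble here: if repeated summands were allowed, Algorithm~\ref{alg:simple} would actually be \emph{incorrect} on $(1,2,\star,4,5,6)$, so any valid proof must use the distinct-summand reading.

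The paper closes exactly this hole by maintaining a stronger invariant and invoking unrefinability at \emph{every} inductive step, rather than once at the end as you do. It first shows that every finite $p_j$ satisfies $p_j \not\in \aname$: at lines~\ref{alg:setpt1} and~\ref{alg:setpt2} the new value is a sum of just \emph{two} numbers already known not to lie in $\aname$ (and these two are distinct, e.g.\ $r > p_j$ in the \textbf{if} branch), so if the sum were a part both summands would be bona fide missing parts and form a two-term refinement, contradicting unrefinability. It then proves $p_j + t\mu \not\in \aname$ by a separate induction on $t$, where each step again uses only the two distinct summands $p_j + t\mu$ and $\mu$. This step-by-step absorption into two-term sums is what sidesteps the multiplicity problem, and it is the missing ingredient in your argument; your ``sum of missing parts'' bookkeeping cannot be patched without it. Finally, your third-paragraph worries about \emph{coverage} (that the $p_j$ capture every forbidden value, that restricting the loop to $j' \neq j$ loses nothing) belong to the converse direction, Lemma~\ref{lmm:corr_refinable}, not to this lemma, which only needs soundness; and the blanket claim that ``pairing a missing part with itself is subsumed'' is false in general, as the part $14 = 7+7$ in the second example above shows.
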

  \begin{proof}
    Consider an unrefinable \pss $\aname$.
    We start by proving that when the algorithm assigns a value \(w\) to
    some \(p_{j}\), it means that $w \not\in \aname$.
    We prove this by induction on the iterations of the main loop at
    line~\ref{alg:mainloop}.
    The base of the induction trivially holds because before the loop
    all $p_{j}$ are set to $\infty$.

    For the inductive step we discuss all the ways these assignments
    occur in the \textsc{Update} function described in
    Algorithm~\ref{alg:update}.
    If we set $p_{j}$ to $w$ at line~\ref{alg:setpj}, then
    $w\not\in\aname$ because \textsc{Update} would have been called
    when $\apart_{w}=\star$.
  If we update $p_{t}$ to value $w$ either at line~\ref{alg:setpt1} or
  at line~\ref{alg:setpt2}, we already know that $\apart_{r} = \star$
  and, by induction, that $p_{j}$ and $p_{j'}$ are not in
  $\aname$.
  Since $\aname$ is unrefinable, the new value of $p_{t}$ (namely $w$)
  cannot be in $\aname$ either.

  We just proved that, at any moment in the algorithm, every finite
  valued \(p_{j}\) is not in \(\aname\). We improve this by showing that
  the same holds for \(p_{j} + t\mu\) for $t \geq 0$, by induction on
  \(t\).
  The case \(t=0\) is what we have proved so far. Assuming
  \(p_{j} + t \mu \not\in\aname\), then by unrefinability the same
  holds for \(p_{j} + (t+1)\mu\). 
  
  To conclude, observe that the only possible way for
  Algorithm~\ref{alg:simple} to be incorrect is to return at
  line~\ref{alg:returnRef}. This happens when there is some
  $\apart_{r} = r$ which is greater than both $\mu$ and $p_{j}$, and
  that it is equal to $j$ modulo $\mu$. Hence $r = p_{j} + t\mu$ for
  some $t>0$. But we just showed that these values are not in
  \(\aname\), therefore the algorithm cannot return at
  line~\ref{alg:returnRef}.
  \end{proof}

  To prove the correctness of Algorithm~\ref{alg:simple} on refinable
  partition we use the following two propositions.

  \begin{proposition}\label{proposition:pastr}
    Consider the iteration $r$ of the main loop of
    Algorithm~\ref{alg:simple}, where $r \not \in \aname$ and
    $r=j \pmod{\mu}$. After that iteration, $p_{j} \leq r$.
  \end{proposition}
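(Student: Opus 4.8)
The plan is to prove the statement by a direct inspection of the \textsc{Update} subroutine, since the main loop of Algorithm~\ref{alg:simple} invokes \textsc{Update}$(\vec{p},r)$ exactly in the case $\apart_r=\star$, which is precisely the hypothesis $r\notin\aname$. Inside \textsc{Update} the residue $j=r\pmod{\mu}$ is recomputed and the routine branches on the test $r>p_j$ at line~\ref{alg:test1}; I would therefore argue the two cases separately and show that each of them guarantees $p_j\le r$ at the end of the call.

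The else branch is immediate: it is entered precisely when $r\le p_j$ (including the case $p_j=\infty$), and its first instruction, line~\ref{alg:setpj}, overwrites $p_j$ with $r$. To conclude $p_j\le r$ in this case I only need to verify that the subsequent \textbf{for} loop does not touch index $j$ again. This holds because the loop ranges over $j'\in\{1,\dots,\mu-1\}\setminus\{j\}$ and writes only into index $t=j+j'\pmod{\mu}$; since $1\le j'\le\mu-1$ forces $j'\not\equiv 0$, we get $t\neq j$, so the value $p_j=r$ survives untouched.

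The if branch is entered when $r>p_j$, so in particular $p_j$ is finite and strictly smaller than $r$. Here the routine performs a single write, at line~\ref{alg:setpt1}, into index $t=r+p_j\pmod{\mu}$. The only concern is that this write could land back on index $j$ and replace $p_j$ by a larger value, which would break the claim. Using the invariant that a finite $p_j$ satisfies $p_j\equiv j\pmod{\mu}$ together with $r\equiv j$, I get $t\equiv 2j\pmod{\mu}$, so $t=j$ can occur only when $j=0$. Even in that subcase the assignment is a minimum, $p_0\leftarrow\min(p_0,r+p_0)$, and since $r>0$ we have $r+p_0>p_0$, so $p_0$ is unchanged. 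Hence in the if branch $p_j$ keeps its pre-call value, which is already strictly below $r$.

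I expect the only mildly delicate point to be this last observation in the if branch, namely ruling out that the single update to $p_t$ silently raises $p_j$; once the residue computation $t\equiv 2j$ pins the collision to $j=0$ and the $\min$ shields $p_0$, both branches yield $p_j\le r$ and the proposition follows.
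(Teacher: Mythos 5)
Your case analysis of \textsc{Update} is correct and follows essentially the same route as the paper's proof: the main loop calls \textsc{Update}$(\vec{p},r)$ exactly when $\apart_r=\star$, and the test at line~\ref{alg:test1} either certifies that $p_j<r$ already holds or triggers the assignment $p_j\leftarrow r$ at line~\ref{alg:setpj}. Your extra care about intra-call clobbering is sound but largely unnecessary: every write at lines~\ref{alg:setpt1} and~\ref{alg:setpt2} has the form $p_t\leftarrow\min(p_t,\cdot)$, so no entry of $\vec{p}$ can increase there, regardless of whether $t$ collides with $j$. The residue computation $t\equiv 2j\pmod{\mu}$ pinning the collision to $j=0$ is a nice observation, but the $\min$ alone settles the matter in all cases at once; it also spares you the reliance on the invariant that a finite $p_j$ is congruent to $j$ modulo $\mu$, which you invoke without proof (it is true, but strictly needs its own small induction over the assignments).

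The one genuine omission concerns the scope of ``after that iteration.'' The proposition is invoked downstream (in Proposition~\ref{proposition:setpj} and Lemma~\ref{lmm:corr_refinable}) at iterations well past $r$ --- for instance, ``by the time the algorithm reaches iteration $b$, we have $p_{j_a}\leq a$'' --- so the bound $p_j\leq r$ must persist through all subsequent iterations, and the paper closes its proof with exactly this point: successive iterations can only decrease the value of $p_j$. Your argument stops at the end of the single call \textsc{Update}$(\vec{p},r)$. The fix is immediate from facts you have already established: every later write to $p_j$ is either a $\min$ (lines~\ref{alg:setpt1} and~\ref{alg:setpt2}) or the overwrite at line~\ref{alg:setpj}, which executes only in the else branch, i.e.\ only when the newly assigned value is at most the old one; hence $p_j$ is non-increasing over the whole run and the bound holds at every moment after iteration $r$. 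Add that one sentence and your proof is complete.
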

  \begin{proof}
    \textsc{Update} is called, and when it reaches
    line~\ref{alg:test1} either the test $r > p_{j}$ passes, or
    $p_{j}$ is set to $r$. Hence at the end of iteration $r$ we have
    that $p_{j} \leq r$. Successive iterations can only decrease the
    value of $p_{j}$.
  \end{proof}
  
  \begin{proposition}\label{proposition:setpj}
    Assume that Algorithm~\ref{alg:simple} reaches iteration \(r\),
    and let \(j\) be the residue class of \(r\) modulo \(\mu\).
    The assignment at line~\ref{alg:setpj} of \textsc{Update} is
    executed if and only if \(r\) is the smallest number strictly
    greater than $\mu$ in residue class \(j\) with $r\not\in\aname$.
  \end{proposition}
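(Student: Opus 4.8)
The statement is a biconditional, so the plan is to establish the two implications separately, after pinning down two elementary facts about the control flow. First, \textsc{Update} is invoked at iteration $r$ exactly when $v_r=\star$, that is when $r\notin\aname$, and since the main loop ranges over indices larger than $\mu$, every such $r$ satisfies $r>\mu$. Second, the assignment at line~\ref{alg:setpj} lies in the \textbf{else} branch, so it runs precisely when the test $r>p_j$ at line~\ref{alg:test1} fails, i.e.\ when $r\le p_j$ upon entering \textsc{Update}. Thus in both directions the real content is a comparison between $r$ and the value of $p_j$ at iteration $r$. Before starting, I would also record the invariant that every finite counter value exceeds $\mu$: this holds initially (all are $\infty$) and is preserved at all three assignment sites, since line~\ref{alg:setpj} stores $r>\mu$, while lines~\ref{alg:setpt1} and~\ref{alg:setpt2} store sums of finite (hence $>\mu$) counters, or $r+p_{j''}$ with $r,p_{j''}>\mu$.

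For the forward implication I would assume line~\ref{alg:setpj} executes, so $r\notin\aname$, $r>\mu$, and $r\le p_j$, and deduce that $r$ is the least such index. If some smaller $r'<r$ were also a missing part of class $j$ with $r'>\mu$, then \textsc{Update} ran at iteration $r'$, and Proposition~\ref{proposition:pastr} gives $p_j\le r'$ immediately afterwards; since the counters are only ever decreased, at iteration $r$ we would have $p_j\le r'<r$, contradicting $r\le p_j$. This direction is essentially a direct application of Proposition~\ref{proposition:pastr}.

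The converse is where the work lies. Assuming $r$ is the least missing part of class $j$ above $\mu$ and that the algorithm actually reaches iteration $r$, I must rule out $p_j<r$ on entry. Minimality of $r$ means no iteration before $r$ processes a class-$j$ missing part above $\mu$, so $p_j$ was never assigned at line~\ref{alg:setpj} before iteration $r$; hence if $p_j=w<r$ at iteration $r$, the value $w$ must have come from a sum assignment at lines~\ref{alg:setpt1} or~\ref{alg:setpt2}. The auxiliary observation I would use here is that such an assignment always stores a value strictly larger than the index of the iteration that produced it (it equals $r''+p_{j''}$, or $p_{j''}+p_{j'}$ with $p_{j''}$ just set to $r''$), so $w$ was set at some iteration $r''<w$; combined with the invariant, $w$ satisfies $\mu<w<r$ and $w\equiv j\pmod\mu$.

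The main obstacle is then to derive a contradiction from the existence of this $w$, and this is exactly where the hypothesis that iteration $r$ is reached becomes essential. Since $\mu<w<r$, the algorithm does scan index $w$, and at that point $p_j\le w$ (it equals $w$ from iteration $r''<w$ and only decreases). I would close with a dichotomy: if $w\in\aname$, then $v_w=w\ge p_j$ triggers a \textsc{Refinable} return at line~\ref{alg:returnRef} strictly before iteration $r$, contradicting that iteration $r$ is reached; and if $w\notin\aname$, then $w$ is a class-$j$ missing part with $\mu<w<r$, contradicting the minimality of $r$. Either way $p_j\ge r$, the \textbf{else} branch is taken, and line~\ref{alg:setpj} runs. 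I expect this final dichotomy---the insight that a prematurely small value of $p_j$ would itself have forced an earlier \textsc{Refinable} output---to be the crux of the argument.
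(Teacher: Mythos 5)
Your proof is correct and follows essentially the same route as the paper's: the forward direction is the same direct application of Proposition~\ref{proposition:pastr}, and your converse hinges on the same key observation that the sum-assignments at lines~\ref{alg:setpt1} and~\ref{alg:setpt2} store values strictly larger than the iteration that produced them, forcing an earlier \textsc{Refinable} return that contradicts reaching iteration $r$. Your only departures are cosmetic refinements --- making explicit the invariant that finite counters exceed $\mu$, and splitting the final contradiction into the cases $w\in\aname$ and $w\notin\aname$ where the paper uses minimality to conclude $p_j\in\aname$ directly.
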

  \begin{proof}
    If there is a smaller \(r' \not\in \aname\) in the same residue
    class \(j\), then by proposition~\ref{proposition:pastr} we 
    have \(p_{j} \leq r' < r\). In that case, line~\ref{alg:setpj}
    is not reached.
       
    In the other direction, let \(r\) be the smallest number in the
    residue class \(j\) for which \(r\not\in\aname\).
    If $r \leq p_{j}$ at the time the main loop reaches iteration $r$,
    then  line~\ref{alg:setpj} is executed.
    Otherwise, \(p_{j}\) must have been assigned to the current value
    at lines~\ref{alg:setpt1} or~\ref{alg:setpt2} in some iteration
    $r'$ earlier than $r$. In both cases the assigned value is
    strictly larger than $r'$.
    Hence we have $r' < p_{j} < r$ and therefore $p_{j} \in \aname$ by
    hypothesis. Algorithm~\ref{alg:simple} returns {\sc Refinable} at
    iteration $p_{j}$ or earlier, and therefore never reaches
    iteration \(r\) as assumed.
  \end{proof}

  Now we can prove the correctness in the refinable case.

  \begin{lemma}
    \label{lmm:corr_refinable}
    Algorithm~\ref{alg:simple} outputs \textsc{Refinable} on every
    refinable \(\aname\).
  \end{lemma}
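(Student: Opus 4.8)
The plan is to prove the contrapositive-flavored statement directly: assume $\aname$ is refinable and show that Algorithm~\ref{alg:simple} reaches line~\ref{alg:returnRef} and returns \textsc{Refinable}. By Proposition~\ref{proposition:simplerefinement}, refinability of $\aname$ guarantees the existence of a smallest refinable part $r$ admitting a refinement of the simple form $r = a + b$, where $a$ and $b$ are missing parts of $\aname$ (i.e.\ $\apart_a = \apart_b = \star$) with $a, b < r$. First I would record that, by minimality of $r$ as the smallest refinable part, no part smaller than $r$ triggers an early return, so the main loop actually reaches iteration $r$; I must double-check that the algorithm does not return \textsc{Unrefinable} prematurely, which it cannot, since it only returns \textsc{Unrefinable} after the loop completes.

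The core of the argument is to exhibit that by the time the scan reaches iteration $r$, the counter $p_{j}$ for $j = r \bmod \mu$ satisfies $p_{j} \leq r$, which forces the test $v_r = r$ \textbf{and} $v_r \geq p_j$ on line~\ref{alg:returnRef} to pass. To establish $p_j \le r$, the idea is to track how the missing parts $a$ and $b$ (together with $\mu$, which is itself missing) propagate into the counters through the \textsc{Update} routine. Concretely, writing $a = a_0 + s\mu$ and $b = b_0 + s'\mu$ in terms of their residues, one shows inductively that after the iterations at which $a$ and $b$ are discovered, the counters in their residue classes are set to values $\le a$ and $\le b$ respectively (this is essentially Proposition~\ref{proposition:pastr}). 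Then the sum $a + b = r$ lands in residue class $j = (a + b) \bmod \mu$, and the \textbf{for} loop in the \textbf{else} branch (line~\ref{alg:setpt2}), or the \textbf{if} branch (line~\ref{alg:setpt1}), combines the two contributing counters to produce a forbidden value in class $j$ that is at most $p_{a\text{-class}} + p_{b\text{-class}} \le a + b = r$. Hence $p_j \le r$ at the moment iteration $r$ is reached.

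I would organize the combination step by cases according to whether $a$ and $b$ lie in the same residue class or in distinct ones, and whether each is the smallest missing part in its class, since \textsc{Update} treats the ``smallest in class'' situation (\textbf{else} branch, feeding line~\ref{alg:setpt2}) differently from subsequent missing parts in an already-seen class (\textbf{if} branch, feeding line~\ref{alg:setpt1}). In each case the monotone \texttt{min}-updates ensure that the relevant counter never exceeds the witnessing sum. A convenient abstraction, which I would prove as an auxiliary claim, is that for every pair of missing parts $x, y$ (each $> \mu$ or equal to a class representative derived from $\mu$) that have been processed before iteration $r$, the counter in class $(x+y) \bmod \mu$ is at most $x + y$; applying this with $x = a$, $y = b$ gives what we need. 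The role of $\mu$ as a universally-available missing part deserves explicit care, since refinements of the form $r = \mu + c$ with a single other missing part $c$ are handled by the same mechanism with one summand equal to $\mu$.

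The main obstacle I anticipate is the bookkeeping in the case analysis of \textsc{Update}, specifically verifying that the \textbf{if} branch (line~\ref{alg:setpt1}), which only updates one downstream counter per call rather than looping over all classes, still suffices to capture every relevant sum $a + b$. The subtle point is whether a forbidden value can ``fail to propagate'' when neither $a$ nor $b$ is the first missing part in its class; here I would lean on Proposition~\ref{proposition:setpj} to pin down exactly when line~\ref{alg:setpj} fires and thereby when the full \textbf{for} loop runs versus the single \textbf{if} update. Once the propagation invariant is secured for the particular witnesses $a$ and $b$ of the minimal refinement, concluding $p_j \le r$ and hence the return at line~\ref{alg:returnRef} is immediate.
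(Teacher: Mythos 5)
Your proposal is correct and follows essentially the same route as the paper's proof: reduce to the minimal simple refinement $r=a+b$ via Proposition~\ref{proposition:simplerefinement}, then combine Proposition~\ref{proposition:pastr} with Proposition~\ref{proposition:setpj} in a residue-class case analysis --- passing to the smallest missing representatives in each class exactly when neither witness is first in its class, which is the subtlety you correctly flagged --- to conclude $p_{j}\leq r$ and force the return at line~\ref{alg:returnRef} (with an early \textsc{Refinable} return being harmless anyway, so your claim that the loop necessarily reaches iteration $r$ need not be proved). One small correction to the ``explicit care'' you defer: since the scan starts at $\mu+1$ and the \textbf{for} loop in \textsc{Update} excludes residue class $0$, a summand $\equiv 0 \pmod{\mu}$ (in particular $\mu$ itself) never participates in any counter combination, so that case is closed not by your combination invariant but directly by Proposition~\ref{proposition:pastr}, giving $p_{j_r}\leq b < r$ --- which is precisely the paper's separate $j_a=0$ case.
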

  \begin{proof}
  By Proposition~\ref{proposition:simplerefinement} we know that the smallest refinable
  part $r$ is refinable as $a+b$ with $a,b \not\in \aname$.
  Let us  denote  $j_{a}= a\pmod{\mu}$, $j_{b} = b \pmod{\mu}$, and
  $j_{r} = r \pmod{\mu}$. Clearly   $j_{r} = j_{a} + j_{b} \pmod{\mu}$.

  If the algorithm does not reach iteration $r$, it must be because it
  returned {\sc Refinable} earlier and so there is nothing to prove. Otherwise let us
  show that it must return {\sc Refinable} at iteration $r$.
  
  The case of $j_{a}=0$ is simple: we have that $j_{r}=j_{b}$ and
  $p_{j_{r}} \leq b$ by Proposition~\ref{proposition:pastr}. Therefore we get
  $r > b \geq p_{j_{r}}$ and the algorithm returns at
  line~\ref{alg:returnRef}. The case of $j_{b}=0$ is symmetric.

  For the remaining case of $j_{a}\neq 0$ and $j_{b}\neq 0$ we split
  into two further subcases: when $j_{a}=j_{b}$ and when
  $j_{a}\neq j_{b}$.

  When $j_{a}\neq 0$, $j_{b}\neq 0$ and $j_{a}=j_{b}$, we
  may assume without loss of generality that $a<b$.
  By the time the algorithm reaches iteration $b$, we have that
  $p_{j_{a}} \leq a$ because of Proposition~\ref{proposition:pastr}.
  The test at line~\ref{alg:test1} at call
  \textsc{Update}$(\vec{p},b)$ can be rewritten as $b > p_{j_{a}}$,
  hence the value $p_{j_{r}}$ is assigned to a number smaller or
  equal than $r = a+b$ in the residue class of $j_{r}$, in
  line~\ref{alg:setpt1}.
  At the time the main loop reaches the iteration $r$, the algorithm
  reaches line~\ref{alg:returnRef} and returns {\sc Refinable}.

  When $j_{a}\neq 0$ and $j_{b}\neq 0$ and $j_{a} \neq j_{b}$,
  we need to consider the smallest missing elements $a'$ and $b'$ 
  that are equal to $j_{a}$ and $j_{b}$, respectively, modulo $\mu$.
  We assume without loss of generality that $a' < b'$.
  When the algorithm reaches iteration $b'$ we have that
  $p_{j_{a}} \leq a'$ because of Proposition~\ref{proposition:pastr}, and that
  assignment $p_{j_{b}} \leftarrow b'$ in line~\ref{alg:setpj} is
  executed because of Proposition~\ref{proposition:setpj}.
  In the for loop right after line~\ref{alg:setpj}, we know that
  $j_{a} \in \{1,\ldots,\mu-1\}\setminus{j_{b}}$, therefore we get
  that $p_{j_{r}}$ is set to some value smaller or equal to $a'+b'$
  and in particular to a value smaller or equal than $r$. In the
  successive iteration the value never increases, and at iteration $r$
  we know that line~\ref{alg:returnRef} gets executed.
\end{proof}

\begin{lemma}[Running time]\label{lmm:runtime}
  Algorithm~\ref{alg:simple}, executed  on  the \ps \(\aname=(\aseq)\) with
  \(\mu=\mex(\aname)\), runs in time $O(\alength+\mu^{2})$.
\end{lemma}
\begin{proof}
  The initialization of the $p_{j}$s and the computation of
  $\mu=\mex(\aname)$ takes $O(\alength)$ steps.
  The main loop in Algorithm~\ref{alg:simple} is executed at most
  $\alength$ times. The inner loop in Algorithm~\ref{alg:update} is
  executed in at most $\mu$ of them.
  The total running time  is therefore $O(\alength+\mu^{2})$.  
\end{proof}

Putting together
Lemmas~\ref{lmm:corr_unrefinable},~\ref{lmm:corr_refinable}
and~\ref{lmm:runtime} we obtain the main theorem of this section.

\begin{theorem}
  \label{thm:main}
  Algorithm~\ref{alg:simple}, executed on the \ps $\aname=(\aseq)$, returns
  \textsc{Unrefinable} if and only if the partition $\aname$
  is unrefinable. Its complexity is $O(\alength + \mu^{2})$, where
  $\mu=\mex(\aname)$.
\end{theorem}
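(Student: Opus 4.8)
The plan is to assemble the theorem directly from the three lemmas already established, since together they cover both assertions of the statement. The complexity bound requires no further work: Lemma~\ref{lmm:runtime} states that Algorithm~\ref{alg:simple}, on input $\aname$ with $\mu=\mex(\aname)$, runs in $O(\alength+\mu^{2})$ time, which is exactly the bound claimed. Thus the only substantive task is to promote the two correctness lemmas into the biconditional on the output label.

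To set up the iff, I would first record two structural facts. First, Algorithm~\ref{alg:simple} always terminates and returns exactly one of the two labels \textsc{Unrefinable} or \textsc{Refinable}: the main loop ranges over the finite interval from $\mu+1$ to $\alength$, and every reachable exit point emits precisely one of these two values. Second, by Definition~\ref{def:refine} every \ps $\aname$ is either refinable or unrefinable and never both. This dichotomy on inputs, paired with the dichotomy on outputs, is what allows the two lemmas to jointly handle all cases.

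For the direction that a genuinely unrefinable partition is correctly classified, I would simply invoke Lemma~\ref{lmm:corr_unrefinable}: if $\aname$ is unrefinable then the algorithm returns \textsc{Unrefinable}. For the converse I would argue by contraposition: if $\aname$ is refinable, Lemma~\ref{lmm:corr_refinable} guarantees the output is \textsc{Refinable}, and hence not \textsc{Unrefinable}. Combining the two, Algorithm~\ref{alg:simple} returns \textsc{Unrefinable} precisely when $\aname$ is unrefinable, which is the required equivalence.

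I expect no real obstacle here, since all the hard reasoning has been discharged in the lemmas; the argument is a routine gluing step. The only point deserving a moment of care is the first structural fact above, namely that the two output branches are genuinely exhaustive and mutually exclusive. Without this, the contrapositive reading of Lemma~\ref{lmm:corr_refinable} would not be entitled to conclude ``returns \textsc{Unrefinable}'' from its negation, so verifying that the algorithm's control flow leaves no third possibility is what makes the biconditional airtight.
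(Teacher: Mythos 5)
Your proposal is correct and matches the paper exactly: the paper proves Theorem~\ref{thm:main} by simply combining Lemmas~\ref{lmm:corr_unrefinable}, \ref{lmm:corr_refinable} and~\ref{lmm:runtime}, which is precisely your gluing argument. Your extra remark on the exhaustiveness and mutual exclusivity of the two output labels is a sound (if implicit in the paper) piece of diligence, not a deviation.
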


We conclude this section with some remarks on Algorithm~\ref{alg:simple},
which will be important in the next section regarding the enumerating algorithm.
First of all,
notice that if its main loop reaches an iteration $r$ where
$p_{j} \leq r$ for all $0 \leq j < \mu$, then the only possible way to
extend $\aname$ is adding an arbitrary number of \(\star\).
Any additional part would lead Algorithm~\ref{alg:simple} to return
\textsc{Refinable}. Moreover:

\begin{definition}\label{def:saturated}
  Let $\aname=(\aseq)$ be an unrefinable \ps  with
  $\mex(\aname)=\mu$ and let $\vec{p}$ be the values computed by
  Algorithm~\ref{alg:simple} on $\aname$. We say that $\aname$ is
  \introduceterm{saturated} when
  \begin{equation*}
   |\{p_{j} \leq \ell \;:\; 0 \leq j < \mu \}|=\mu.
  \end{equation*}
\end{definition}

\begin{proposition}\label{proposition:atmostmu}
  Consider an unrefinable partition $N=\aname_{1} + \aname_{2} +
  \ldots + \aname_{k}$ with  minimum excludant $\mu$.
  There are at most $\mu$ unrefinable \ps whose sum is $N$, which are
  not saturated, and which have parts $\{\aname_{1}, \aname_{2}, \ldots, \aname_{k}\}$.
\end{proposition}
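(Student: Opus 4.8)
The plan is to exploit that all the \pss under consideration represent one and the same partition, and therefore differ only in how many trailing $\star$'s they carry. Write $M=\max\{\aname_1,\dots,\aname_k\}$ for the largest part; then a \ps with these parts is completely determined by its length $\ell\ge M$, and for each such $\ell$ I denote by $\aname^{(\ell)}$ the \ps of length $\ell$ with parts $\{\aname_1,\dots,\aname_k\}$, obtained by appending $\ell-M$ trailing $\star$'s. First I would record three facts. (i) Every $\aname^{(\ell)}$ is unrefinable, since by Proposition~\ref{proposition:simplerefinement} refinability depends only on the set of parts: a refinement $r=a+b$ uses missing parts $a,b<r\le\ell$, which lie within the length however large $\ell$ is. (ii) For every $\ell\ge M+1$ the leftmost $\star$ sits at the smallest missing integer $\mu$, so $\mex(\aname^{(\ell)})=\mu$. (iii) The only length behaving differently is $\ell=M$ when the parts are the consecutive block $\{1,\dots,M\}$: there no $\star$ occurs, $\mex=0$, and $\aname^{(M)}$ is vacuously saturated in the sense of Definition~\ref{def:saturated}. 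Hence it suffices to count the non-saturated lengths $\ell\ge M$.

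The core observation is that $\aname^{(\ell)}$ is saturated as soon as the trailing $\star$'s provide $\mu$ consecutive \emph{processed} positions. The main loop of Algorithm~\ref{alg:simple} scans positions $\mu+1,\dots,\ell$, and whenever it meets a missing part at a position $s$ the call to \textsc{Update} forces $p_{s\bmod\mu}\le s$: either directly in the \textbf{else} branch, or because the \textbf{if} branch is taken, which means $p_{s\bmod\mu}$ was already strictly smaller than $s$. As the counters are only ever decreased by later iterations, once some position $s\le\ell$ in residue class $j$ has been processed we retain $p_j\le s\le\ell$ until the end of the scan. A block of $\mu$ consecutive processed positions meets every residue modulo $\mu$, and thus forces all $p_j\le\ell$, that is, makes $\aname^{(\ell)}$ saturated.

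It then remains to locate such a block and to count the lengths preceding saturation, which I would do by splitting on whether the parts are consecutive. If $\mu\le M$, the trailing positions $M+1,\dots,M+\mu$ are $\mu$ consecutive integers all strictly larger than $\mu$, hence all processed; so $\aname^{(\ell)}$ is saturated for every $\ell\ge M+\mu$, and the non-saturated lengths are confined to $\{M,\dots,M+\mu-1\}$, a set of size $\mu$. If instead $\mu=M+1$, the first trailing position equals $\mu$, i.e.\ the excludant itself, which the loop skips; residue class $0$ is then covered only when position $2\mu$ is processed, so saturation holds for every $\ell\ge 2\mu$ and the non-saturated lengths with $\mex=\mu$ lie in $\{M+1,\dots,2\mu-1\}=\{\mu,\dots,2\mu-1\}$, again a set of size $\mu$, while $\ell=M$ is saturated by (iii). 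In both cases at most $\mu$ of the \pss are non-saturated, which is the assertion.

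The step I expect to be the main obstacle is exactly this case split. Because the scan starts at $\mu+1$ and never processes the excludant's own position, the saturation threshold is shifted by one in the consecutive case, and a careless count would yield $\mu+1$ rather than $\mu$. The delicate point is to see that this shift is compensated precisely because the minimal representative $\aname^{(M)}$ is itself (vacuously) saturated there, keeping the total at $\mu$. A tidier route, which I would mention, is to first establish that saturation is monotone in $\ell$—appending one further trailing $\star$ runs \textsc{Update} once more, only decreasing the $p_j$'s while raising the comparison threshold from $\ell$ to $\ell+1$—so that the non-saturated lengths form a true initial interval; the bound on its size is then obtained exactly as above.
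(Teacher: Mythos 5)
Your proof is correct and follows essentially the same route as the paper's: the \pss with the given parts are exactly the trailing-star extensions of the unique sequence of length $M$, and saturation (Definition~\ref{def:saturated}) necessarily holds once enough trailing stars have been appended, confining the non-saturated sequences to at most $\mu$ lengths. You are in fact more careful than the paper's three-line argument, which asserts saturation after exactly $\mu$ added stars and silently glosses over the consecutive-parts case $\mu=M+1$, where (since the scan starts at $\mu+1$) the threshold shifts to $\mu+1$ stars and the count of $\mu$ is rescued, exactly as you observe, by the star-free sequence of length $M$ having minimal excludant $0$ and hence being vacuously saturated.
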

\begin{proof}
  Assume without loss of generality that $\aname_{k}$ is the
  largest part.
  There is a unique \ps $\aname$ with parts
  $\{\aname_{1}, \aname_{2}, \ldots, \aname_{k}\}$ and length
  $\aname_{k}$.
  Any other partition is obtained adding stars to the sequence, but
  after adding $\mu$ stars the resulting \ps must
  be saturated.
\end{proof}

\section{How to enumerate unrefinable partitions}
\label{sec:enum}
The verification via Algorithm~\ref{alg:simple} of a \ps
$\aname=(\aseq)$ with $\mu=\mex(\aname)$ starts by scanning the interval
$\mu+1, \ldots, \alength$.
Up to the point when some index \(r\) is under scrutiny, the
algorithm uses no information about the elements of $\aname$ of successive
indexes. More concretely, the values $\vec{p}$ computed at iteration
$r$ are completely determined by the same old values computed at
iteration $r-1$ and by the fact that $r$ is either in $\aname$ or not.
Therefore we can design the enumeration process as the visit of the
tree of all possible \pss, so that the verification algorithm is run
on the sequence corresponding to any branch of tree (see
Figure~\ref{fig:tree}).
A branch is pruned as soon as the the corresponding sequence has no
possible extensions that are unrefinable and of sum at most \(N\).
When the sum of a sequence corresponding to a surviving branch equals the goal
value \(N\), the sequence is returned as output.

It is convenient to enumerate separately all unrefinable partitions of
\(N\) that have the same minimal excludant. Given $N$, we set $n$ as
the largest positive integer such that
\begin{equation*}
  \sum^{n}_{i=1} i \leq N 
\end{equation*}
and then we partition the search space of \pss according to prefixes:
\begin{equation}\label{eq:prefixes}
\begin{array}{ll}
  \aname^{\dagger} &=(1,2,3,4,\ldots,n-2,n-1,n),\\
  \aname^{n}&=(1,2,3,4,\ldots,n-2,n-1,\star),\\
  \aname^{n-1}&=(1,2,3,4,\ldots,n-2,\star),\\
  \vdots\\
  \aname^{4}&=(1,2,3,\star), \\
  \aname^{3}&=(1,2,\star),\\
  \aname^{2}&=(1,\star),\\
  \aname^{1}&=(\star).
\end{array}
\end{equation}

If $N$ is triangular, i.e.\ if $N={n(n+1)}/{2}$, then the
sequence
$\aname^{\dagger}$ itself is the unique unrefinable partition of \(N\)
with no minimal excludant, and it must be in the output of the enumeration.
If \(N\) is not triangular, i.e.\ if ${n(n+1)}/{2} < N$, there is no
unrefinable partition with prefix $\aname^{\dagger}$:  any additional
part would make the sum exceed \(N\).

Any other unrefinable partition of \(N\) must have minimal excludant
\(1 \leq \mu \leq n\), and for a given value of \(\mu\) there is a one-to-one
correspondence between these partitions and the \pss $\aname$
that
\begin{itemize}
  \item are unrefinable,
  \item have $\asum(\aname)=N$,
  \item have prefix $\aname^{\mu}=(1,2,3,4,\ldots,\mu-1,\star)$,
  \item have $\alast=\alength$ (i.e.\ not ending with $\star$).
\end{itemize}
In order to enumerate them, we describe the recursive algorithm
\textsc{Enumerate} (cf.~Algorihtm~\ref{alg:enum}).

\begin{algorithm}
\DontPrintSemicolon
\SetKwInOut{Input}{Input}\SetKwInOut{Output}{Output\ }
\SetKw{Continue}{continue}
\SetKw{Yield}{output}
\Input{$N$\\$\aname=(1,2,3,\ldots,\mu-1,\star,\apart_{\mu+1},
  \apart_{\mu+2}, \ldots)$,
  unrefinable\\$\vec{p}=(p_0,\ldots,p_{\mu-1})$}
\Output{All unrefinable partitions of $N$ with prefix $\aname$, not
  ending with $\star$.}
\BlankLine
$r \leftarrow |\aname|+1$\;
$j \leftarrow r \pmod{\mu}$\;
\BlankLine
\tcp*[l]{Cases when we extend with $r$, if possible}
\lIf{$r < p_{j}$ and $\asum(\aname)+r=N$\label{alg:output}}
{\Yield $\aname \cup \{r\}$}
\lIf{$r < p_{j}$ and $\asum(\aname)+r<N$\label{alg:keepgoing}}
{\textsc{Enumerate}($N$,\ $\aname \cup \{r\}$,$\ \vec{p}\ $)}
\BlankLine
\tcp*[l]{Case when we extend with $\star$}
$\vec{p} \leftarrow$\textsc{Update}$(\vec{p},r)$\;
\lIf{$\aname \cup \{\star\}$ is not saturated}{\textsc{Enumerate}($N$,\ $\aname \cup \{\star\}$,$\ \vec{p}\ $)}
\caption{\label{alg:enum} \textsc{Enumerate}}
\end{algorithm}

\textsc{Enumerate} starts with a \ps $\aname$ with
\(\mex(\aname)=\mu\) and extends it in all possible ways in a binary
tree-like fashion (cf.\ Figure~\ref{fig:tree}).
\begin{figure}
  \begin{center}
    \includegraphics[scale=0.8]{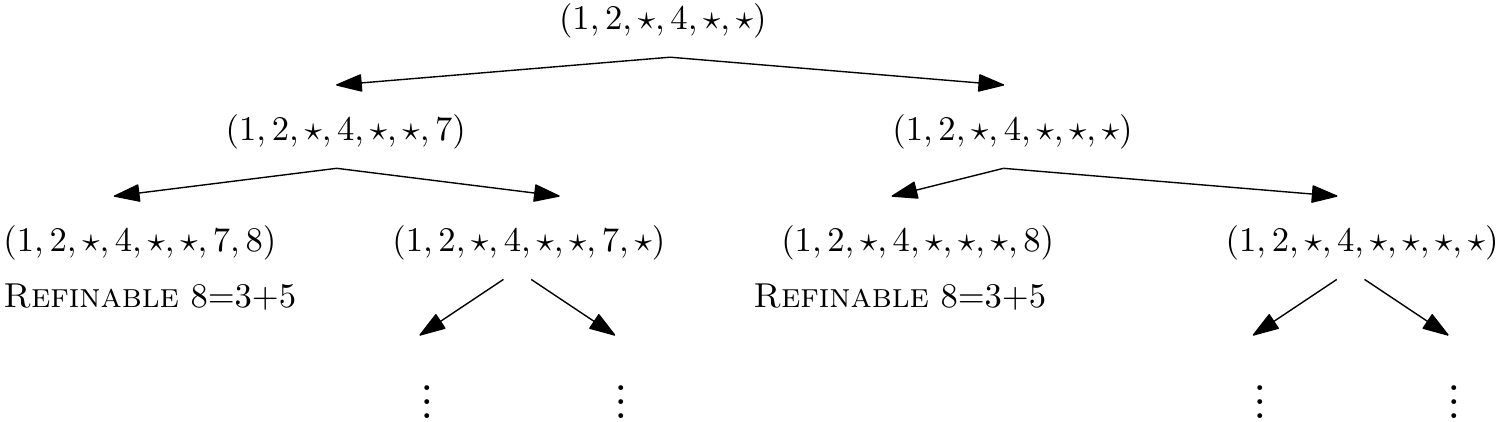}
  \end{center}
  \caption{The branching from the sequence
    $(1,2,\star,4,\star,\star)$. For any two sequences in the tree,
    the running of Algorithm~\ref{alg:simple} proceeds identically up
    to the point that the corresponding branches diverge.}
  \label{fig:tree}
\end{figure}
When visiting the node of the tree corresponding to sequence $\aname$,
the algorithm decides whether to branch on $\aname \cup \{v\}$, and
successively whether to branch on $\aname \cup \{\star\}$. Therefore,
the tree is visited in lexicographic order.
A branch is pruned either when a partition of $N$ is reached, when an
extension goes over the goal value $N$, when it introduces a refinable
part, or when the \ps is saturated according to
Definition~\ref{def:saturated}, and therefore no non-trivial extension
would ever be unrefinable.

Walking along the tree, we update the values $\vec{p}$ using the same
\textsc{Update} function that we used in Algorithm~\ref{alg:simple}.
The idea is that the computation done by the recursive process on the
sequence corresponding to some path is the same as the
one done by Algorithm~\ref{alg:simple} on the same sequence.
Formally we consider
\begin{itemize}
  \item[$P1$] the set of pairs $(\aname,\vec{p}\ )$ such that  $\aname$
  is unrefinable and not saturated, $\mex(\aname)=\mu$, 
  $\asum(\aname) < N$, and such that running Algorithm~\ref{alg:simple} on
  $\aname$ computes the values $\vec{p}$;
  \item[$P2$] the set of pairs $(\aname,\vec{p}\ )$ such that the
  execution of \textsc{Enumerate}$(N,\aname^{\mu},(\infty,\ldots,\infty))$ produces
  a recursive call \textsc{Enumerate}$(N,\aname,\vec{p}\ )$.
\end{itemize}

\begin{lemma}\label{lmm:equivalence}
  The two sets $P1$ and $P2$ are equal.
\end{lemma}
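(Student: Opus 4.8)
The plan is to establish the set equality by proving the two inclusions $P2\subseteq P1$ and $P1\subseteq P2$ separately, each by induction. Everything hinges on one bridging observation, already isolated in the discussion preceding the lemma: the vector $\vec{p}$ that Algorithm~\ref{alg:simple} holds after scanning index $r$ is determined solely by the vector it held after index $r-1$ and by whether $r\in\aname$, the update in the missing-part case being exactly one call to \textsc{Update}. Since \textsc{Enumerate} applies the very same \textsc{Update} when it appends a $\star$ and leaves $\vec{p}$ untouched when it appends a part, the vector carried along any root-to-node path of the recursion tree coincides with the vector Algorithm~\ref{alg:simple} would compute from scratch on the corresponding sequence. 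I would record this as a preliminary remark and reuse it freely.

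For $P2\subseteq P1$ I would induct on the depth of the call \textsc{Enumerate}$(N,\aname,\vec{p}\,)$. The base case is the top call $(\aname^{\mu},(\infty,\ldots,\infty))$: here $\aname^{\mu}=(1,2,\ldots,\mu-1,\star)$ is unrefinable with $\mex(\aname^{\mu})=\mu$, its sum is $<N$ whenever $1\le\mu\le n$, it is not saturated because every $p_j=\infty$, and Algorithm~\ref{alg:simple} returns $(\infty,\ldots,\infty)$ since its main loop ranges over the empty set $\{\mu+1,\ldots,\mu\}$. For the inductive step I would inspect the two kinds of child a node spawns. Appending a part $r=|\aname|+1$ occurs on line~\ref{alg:keepgoing} only when $r<p_j$ and $\asum(\aname)+r<N$; the first condition gives $p_j>r=|\aname\cup\{r\}|$, which forces $\aname\cup\{r\}$ to be not saturated, and since the run of Algorithm~\ref{alg:simple} on $\aname\cup\{r\}$ agrees with the run on $\aname$ up to index $r-1$ and does not return at line~\ref{alg:returnRef} at index $r$ (because $v_r=r<p_j$), Theorem~\ref{thm:main} yields unrefinability, while $\mex$ and $\vec{p}$ are unchanged. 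Appending a $\star$ produces $\aname\cup\{\star\}$ with $\vec{p}\leftarrow\textsc{Update}(\vec{p},r)$ under an explicit non-saturation guard; this adds a missing part larger than every existing part, hence stays unrefinable with the same $\mex$ and the same sum, and its $\vec{p}$ matches Algorithm~\ref{alg:simple} by the bridging remark. In both cases the child lies in $P1$.

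For $P1\subseteq P2$ I would induct on $|\aname|$. When $|\aname|=\mu$ the only unrefinable \ps with $\mex=\mu$ is $\aname^{\mu}$, paired with $(\infty,\ldots,\infty)$, which is the top call. For $|\aname|>\mu$ I would strip the last coordinate, writing $\aname=\aname^{-}\cup\{x\}$ with $x\in\{r',\star\}$ and $r'=|\aname|$, and let $\vec{p}^{-}$ be the vector Algorithm~\ref{alg:simple} computes on $\aname^{-}$. The goal is to show $(\aname^{-},\vec{p}^{-})\in P1$, so the induction hypothesis gives $(\aname^{-},\vec{p}^{-})\in P2$, and then to verify that the recursive call issued from $\aname^{-}$ emits $(\aname,\vec{p}\,)$. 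Inheriting unrefinability, $\mex=\mu$ and the sum bound for $\aname^{-}$ is routine, as $\aname^{-}$ is a prefix of $\aname$ and $\mu\le|\aname^{-}|$. If $x=r'$, the guard $r'<p_j^{-}$ must hold, since otherwise Algorithm~\ref{alg:simple} would return \textsc{Refinable} on $\aname$ at line~\ref{alg:returnRef}, contradicting $\aname\in P1$ via Theorem~\ref{thm:main}; together with $\asum(\aname^{-})+r'=\asum(\aname)<N$ this selects line~\ref{alg:keepgoing} rather than line~\ref{alg:output}, producing exactly $(\aname,\vec{p}^{-})=(\aname,\vec{p}\,)$. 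If $x=\star$, the non-saturation of $\aname$ is precisely the guard, and \textsc{Update} produces the correct $\vec{p}$.

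The step I expect to be the main obstacle is proving that $\aname^{-}$ is \emph{not} saturated in the $P1\subseteq P2$ direction, which is not automatic when moving backwards along a branch. I would argue by contradiction: if $\aname^{-}$ were saturated, then every $p_j^{-}$ satisfies $p_j^{-}\le|\aname^{-}|<r'$, so appending the part $r'$ would trigger line~\ref{alg:returnRef} (contradicting that $\aname$ is unrefinable), whereas appending a $\star$ would, after \textsc{Update}, still leave every entry $\le r'$ and hence make $\aname$ itself saturated, contradicting $(\aname,\vec{p}\,)\in P1$. This uses that \textsc{Update} never increases an entry, which one checks directly: the two \textsc{min} assignments at lines~\ref{alg:setpt1} and~\ref{alg:setpt2} can only lower a value, and the assignment $p_j\leftarrow r$ at line~\ref{alg:setpj} is reached only when the test $r>p_j$ fails, i.e.\ when $r\le p_j$. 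Establishing this monotonicity and the resulting saturation-propagation is the technical heart of the argument; the remaining verifications are the routine bookkeeping enabled by the bridging remark.
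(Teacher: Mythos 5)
Your proof is correct, and at its core it runs along the same lines as the paper's: induction on the length of the sequence, using the observation that \textsc{Enumerate} threads the vector $\vec{p}$ through the recursion exactly as Algorithm~\ref{alg:simple} would compute it, and matching the guards of lines~\ref{alg:output}--\ref{alg:keepgoing} and the saturation test against the defining conditions of $P1$. The structural difference is that the paper performs a single induction on pairs assumed to lie in both sets and argues that the two one-step extension conditions coincide, whereas you split the equality into two inclusions, proving $P1\subseteq P2$ by stripping the last coordinate. This forces you to confront a point the paper's terser argument leaves implicit: for the induction hypothesis to apply to the prefix $\aname^{-}$ of an element of $P1$, one must know that $\aname^{-}$ is \emph{not saturated}, which does not follow formally from the non-saturation of $\aname$. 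Your contradiction argument --- if $\aname^{-}$ were saturated then all $p_j^{-}\leq |\aname^{-}| < r'$, so appending $r'$ would trigger line~\ref{alg:returnRef} (contradicting unrefinability via Lemma~\ref{lmm:corr_unrefinable}), while appending $\star$ would, by the monotonicity of \textsc{Update} (the \textsc{min} assignments at lines~\ref{alg:setpt1} and~\ref{alg:setpt2} only lower values, and line~\ref{alg:setpj} runs only when $r\leq p_j$), leave $\aname$ itself saturated --- is sound and correctly identified as the technical heart. In short, the paper's proof buys brevity; yours buys completeness, making explicit a saturation-propagation fact that the published argument silently relies on.
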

\begin{proof}
  We prove this statement by induction on the length of the sequence.
  For the base case, the \ps $\aname^{\mu}$, paired
  with all $p_{j}$s set to $\infty$, is both in $P1$ and $P2$ because
  $\asum(\aname^{\mu})<N$.

  For the induction step, consider the pair $(\aname,\vec{p}\ )$ for
  which we know that $\aname$ is unrefinable, is not saturated, that
  $\mex(\aname)=\mu$ and $\asum(\aname) < N$, and that a recursive
  call \textsc{Enumerate}$(N,\aname,\vec{p}\ )$ occurs.
  
  For the extension $\aname\cup\{r\}$ the values of $\vec{p}$ do not
  change in both algorithms, therefore if $\aname$ is not saturated,
  neither is $\aname\cup\{r\}$.
  The pair $(\aname \cup \{r\},\vec{p} \ )$ is in $P1$ if and only if
  $r < p_{j}$ for $r=j \pmod{\mu}$ and $\asum(\aname)+r < N$. But these
  are exactly the same condition for the recursive call
  \textsc{Enumerate}$(N,\aname\cup \{r\},\vec{p}\ )$.

  Considering the extension $\aname\cup\{\star\}$, this is of course
  as unrefinable as $\aname$ and the sum does not change either.
  Let $\vec{q} \leftarrow$\textsc{Update}$(\ \vec{p},r)$.
  The pair $(\aname\cup\{\star\},\vec{q}\ )$ is in $P1$ if and only if
  $\aname\cup\{\star\}$ it is not saturated , and that is the exact
  same condition for the recursive call
  \textsc{Enumerate}$(N,\aname\cup \{\star\},\vec{q}\ )$ to happen.
\end{proof}

We are ready to show that, provided the appropriate input,
\textsc{Enumerate} correctly produces all the unrefinable partitions
of \(N\) with a given minimal excludant \(\mu\).

\begin{lemma}\label{lmm:enumerate}
  The recursive algorithm
  \textsc{Enumerate}$(N,\aname^{\mu},\vec{p}\ )$, where 
  $\vec{p}=(p_{0}, \ldots, p_{\mu-1})$ are all set to $\infty$,
  outputs the unrefinable \pss whose sum is $N$ with minimum
  excludant $\mu$, and without $\star$ in the last position.
\end{lemma}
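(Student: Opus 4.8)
The plan is to establish the two directions separately: \emph{soundness}, that every sequence returned by \textsc{Enumerate}$(N,\aname^{\mu},\vec{p}\,)$ is an unrefinable \ps of sum $N$ with $\mex=\mu$ not ending in $\star$; and \emph{completeness}, that every such \ps is returned. The central device in both directions is Lemma~\ref{lmm:equivalence}, which tells us that the recursive calls made during the execution are exactly the pairs $(\aname,\vec{p}\,)$ of $P1$: whenever a call \textsc{Enumerate}$(N,\aname,\vec{p}\,)$ occurs, $\aname$ is unrefinable and not saturated, $\mex(\aname)=\mu$, $\asum(\aname)<N$, and $\vec{p}$ is precisely the vector that Algorithm~\ref{alg:simple} computes on $\aname$. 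This lets us reason about each recursion node through the correctness of the verification algorithm.

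For soundness I would observe that the only outputs are produced at line~\ref{alg:output}, where a call \textsc{Enumerate}$(N,\aname,\vec{p}\,)$ yields $\aname\cup\{r\}$ with $r=|\aname|+1$ and $j=r\bmod\mu$, under the guard $r<p_{j}$ and $\asum(\aname)+r=N$. Since $(\aname,\vec{p}\,)\in P1$, the sequence $\aname$ is unrefinable and $\vec{p}$ agrees with the run of Algorithm~\ref{alg:simple}. That run on $\aname\cup\{r\}$ coincides with its run on $\aname$ up to iteration $r$, and at iteration $r$ the guard $r<p_{j}$ prevents a return at line~\ref{alg:returnRef}; hence it returns \textsc{Unrefinable}, and by Theorem~\ref{thm:main} the sequence $\aname\cup\{r\}$ is unrefinable. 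Its sum is $\asum(\aname)+r=N$, it ends with the part $r$ rather than $\star$, and since $r>\mu$ its minimal excludant is unchanged, namely $\mu$. Thus every output has the required form.

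For completeness, given a target $\sigma$ — an unrefinable \ps of sum $N$ with $\mex(\sigma)=\mu$ not ending in $\star$ — I would peel off its last coordinate: write $r=|\sigma|$, which is a genuine part since $\sigma$ does not end in $\star$, and set $\aname$ to be the prefix with $\sigma=\aname\cup\{r\}$. Then $\aname$ is unrefinable, because any refinement of a prefix is a refinement of $\sigma$; it has $\mex(\aname)=\mu$, since $\mu<r=|\sigma|$ leaves the leftmost $\star$ inside $\aname$; and it has $\asum(\aname)=N-r<N$. Crucially, $\aname$ is not saturated: if it were, then by the remark preceding Definition~\ref{def:saturated} every $p_{j}$ would satisfy $p_{j}\le|\aname|<r$, so extending $\aname$ with the part $r$ would make Algorithm~\ref{alg:simple} return \textsc{Refinable}, contradicting the unrefinability of $\sigma=\aname\cup\{r\}$. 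Hence $(\aname,\vec{p}\,)\in P1$ with $\vec{p}$ the values Algorithm~\ref{alg:simple} assigns, and by Lemma~\ref{lmm:equivalence} the call \textsc{Enumerate}$(N,\aname,\vec{p}\,)$ occurs. Inside it, $r=|\aname|+1$; the unrefinability of $\sigma$ forces $r<p_{j}$ (otherwise Algorithm~\ref{alg:simple} would reject $\sigma$ at iteration $r$), while $\asum(\aname)+r=N$, so the guard at line~\ref{alg:output} fires and $\sigma$ is output.

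I expect the delicate point to be the completeness argument, specifically the verification that the peeled prefix $\aname$ is \emph{not saturated}: this is exactly what guarantees, through Lemma~\ref{lmm:equivalence}, that the recursion reaches $\aname$ instead of pruning the branch. The remaining care is to align the vector $\vec{p}$ carried by the recursion with the one Algorithm~\ref{alg:simple} produces, which again is the content of Lemma~\ref{lmm:equivalence}; once this alignment is in place, translating the unrefinability of the full sequences into the loop guard $r<p_{j}$ via Theorem~\ref{thm:main} is routine.
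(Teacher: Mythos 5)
Your proof is correct, and its skeleton is the same as the paper's: both directions hinge on Lemma~\ref{lmm:equivalence} to align each recursion node with the state of Algorithm~\ref{alg:simple}, and then translate (un)refinability of $\aname\cup\{r\}$ into the guard $r<p_j$ via the correctness of the verification algorithm. Two differences are worth noting. First, for soundness the paper argues by pruning: it takes the \emph{shortest} prefix $\aname\cup\{r\}$ of a hypothetical refinable output with $\aname$ unrefinable, and shows the branch is cut because $r\geq p_j$ there; you instead argue directly at the output site, simulating Algorithm~\ref{alg:simple} on $\aname\cup\{r\}$ and concluding it returns \textsc{Unrefinable} — logically equivalent, and arguably a more self-contained way to certify each emitted sequence. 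Second, and more substantively, in the completeness direction you explicitly verify that the peeled prefix $\aname$ is \emph{not saturated} (via the contradiction: saturation would force $p_j\leq|\aname|<r$ and make $\sigma=\aname\cup\{r\}$ refinable), which is genuinely required for $(\aname,\vec{p}\,)$ to lie in $P1$ and hence for Lemma~\ref{lmm:equivalence} to yield the recursive call; the paper's own proof invokes the lemma without checking this condition, leaving it implicit in the remark preceding Definition~\ref{def:saturated}. So your write-up closes a small gap the paper glosses over, and your instinct that this was the delicate point is exactly right.
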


\begin{proof}
  By definition, the output of the enumeration only includes \pss of $N$, not ending with $\star$. We need to prove that
  the output includes all unrefinable ones and no refinable ones.
  
  Any unrefinable \ps of $N$ with minimal excludant
  $\mu$, not ending with $\star$, can be written as
  $\aname \cup \{r\}$ where $\mex(\aname)=\mu$ and
  $\asum(\aname) = N - r < N$.
  By Lemma~\ref{lmm:equivalence}, there is a recursive call
  \textsc{Enumerate}$(N,\aname,\vec{p}\ )$ where $\vec{p}$ are the
  values computed by Algorithm~\ref{alg:simple} on $\aname$.
  By the correctness of Algorithm~\ref{alg:simple} it must be $r < p_{j}$ for
  $j = r \pmod{\mu}$ since $\aname \cup \{r\}$ is unrefinable.
  Hence the call \textsc{Enumerate}$(N,\aname,\vec{p}\ )$ outputs
  $\aname \cup \{r\}$.
  
  Now we want to show that no refinable \ps of $N$ is
  in the output.
  Consider the shortest prefix $\aname \cup \{r\}$ of any such
  sequence where $\aname$ is unrefinable and $\aname \cup \{r\}$
  is refinable.
  It still holds that $\mex(\aname)=\mu$ and that $\asum(\aname)<N$,
  therefore, by Lemma~\ref{lmm:equivalence}, there is a recursive call
  \textsc{Enumerate}$(N,\aname,\vec{p}\ )$ where $\vec{p}$ are the
  values computed by Algorithm~\ref{alg:simple} on $\aname$.
  By the correctness of Algorithm~\ref{alg:simple}, it must be $r \geq p_{j}$ for
  $j = r \pmod{\mu}$ since $\aname \cup \{r\}$ is refinable.
  Hence \textsc{Enumerate} skips $\aname \cup \{r\}$ and all its extensions.
\end{proof}

We are ready to describe the algorithm that enumerates all unrefinable
partitions of \(N\).

\begin{algorithm}
\DontPrintSemicolon
\SetKwInOut{Input}{Input}\SetKwInOut{Output}{Output\ }
\SetKw{Continue}{continue}
\SetKw{Yield}{output}
\Input{$N$}
\Output{All unrefinable partitions of $N$.}
\BlankLine
$n \leftarrow$ largest \(n\) such that \(\sum_{i=1}^{n} \leq N\)\;
\lIf{$\sum_{i=1}^{n} = N$} {\Yield $(1,2,3,\ldots,n)$}
\BlankLine
\For{$\mu$ in $\{n,n-1, \ldots, 2, 1 \}$}{%
  $\vec{p}=(p_0, \ldots, p_{\mu-1}) \leftarrow (\infty, \infty,\ldots,\infty)$\;
  \(\aname^{\mu} \leftarrow (1,2,3,\ldots,\mu-1,\star)\)\;
  \textsc{Enumerate}($N$,\(\aname^{\mu}\),$\ \vec{p}\ $)
}
\caption{\label{alg:final} \textsc{UnrefinablePartitions} (enumerate all unrefinable partitions
  of $N$)}
\end{algorithm}

\begin{theorem}
  \label{thm:enumeration}
  Algorithm~\ref{alg:final} outputs all unrefinable partitions
  of $N$ in time \mbox{$O(N) \cdot U(N)$}, where $U(N)$ is the number of
  unrefinable partitions of integers $< N$.
\end{theorem}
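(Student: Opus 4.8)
The plan is to prove the statement in two parts, correctness and running time, reusing the lemmas already established. For correctness, I would first note that Algorithm~\ref{alg:final} sorts the unrefinable partitions of $N$ by their minimal excludant. The line before the main loop outputs $(1,2,\ldots,n)$ exactly when $N$ is triangular, which is the unique unrefinable partition of $N$ with no minimal excludant. Every other unrefinable partition has an excludant $\mu$ with $1 \le \mu \le n$: the parts $1,\ldots,\mu-1$ are forced, so its sum is at least $\mu(\mu-1)/2$, and $\mu=n+1$ would force a further part $\ge n+2$ and push the sum above $N$ by the maximality of $n$. For each such $\mu$ the loop calls \textsc{Enumerate}$(N,\aname^{\mu},(\infty,\ldots,\infty))$, which by Lemma~\ref{lmm:enumerate} outputs precisely the unrefinable \pss of sum $N$, excludant $\mu$, and not ending in $\star$. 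Because these \pss are in one-to-one correspondence with the unrefinable partitions of $N$ of excludant $\mu$ (the representation with $\alast=\alength$), and because the excludant is an invariant of the partition, every unrefinable partition of $N$ is produced exactly once.

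For the running time I would bound the number of recursive calls and the work per call separately, then multiply. The structural input is Lemma~\ref{lmm:equivalence} ($P1=P2$): for each $\mu$ the calls made inside the $\mu$-th tree are in bijection with the non-saturated unrefinable \pss of excludant $\mu$ and sum $<N$, and since adding parts or stars to $\aname^{\mu}$ never changes the excludant, the trees for distinct $\mu$ visit disjoint sets of \pss. To count the total, I would group these \pss by their underlying partition. Proposition~\ref{proposition:atmostmu} bounds the number of non-saturated representations of a fixed underlying partition of excludant $\mu$ by $\mu$, and $\mu=O(\sqrt N)$ since $\mu(\mu-1)/2 \le \asum(\aname) < N$. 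Summing over all unrefinable partitions of integers strictly below $N$ therefore gives at most $O(\sqrt N)\cdot U(N)$ calls. The output operations at line~\ref{alg:output} can be charged to their parent call, so they do not affect this count up to a constant factor.

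Next I would bound the per-call cost. Maintaining $\asum(\aname)$ and $|\aname|$ incrementally makes the two branches on $r$ cost $O(1)$; the remaining cost is one call to \textsc{Update}, whose \textbf{else} branch runs a loop of length $O(\mu)$, together with the saturation test, which scans the $\mu$ counters in $O(\mu)$. Hence each call costs $O(\mu)=O(\sqrt N)$, and multiplying by the $O(\sqrt N)\cdot U(N)$ calls yields $O(N)\cdot U(N)$. The overhead of the main loop of Algorithm~\ref{alg:final}, namely finding $n$ and reinitialising the $\mu$ counters for each $\mu$, is $\sum_{\mu=1}^{n} O(\mu)=O(n^2)=O(N)$, which is absorbed into the bound.

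The step I expect to be the main obstacle is the call-counting argument, where one must combine Lemma~\ref{lmm:equivalence} and Proposition~\ref{proposition:atmostmu} carefully: the former identifies the visited nodes with $P1$ so that nothing is missed or double-counted across values of $\mu$, while the latter controls the multiplicity of \ps representations of each underlying partition. The delicate point is keeping the two $O(\sqrt N)$ factors separated, one coming from the representations-per-partition bound and one from the per-call work, so that their product collapses to the single factor $O(N)$ multiplying $U(N)$ rather than to a larger power of $N$.
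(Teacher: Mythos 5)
Your proposal is correct and follows essentially the same route as the paper's proof: correctness via Lemma~\ref{lmm:enumerate} applied to each prefix $\aname^{\mu}$ (plus the triangular case), and the time bound by identifying the recursive calls with the set $P1$ of Lemma~\ref{lmm:equivalence}, invoking Proposition~\ref{proposition:atmostmu} to bound representations per underlying partition by $\mu = O(\sqrt{N})$, and multiplying by the $O(\sqrt{N})$ per-call cost of \textsc{Update}. Your additions (justifying $\mu \leq n$, the disjointness of the trees across values of $\mu$, and the $O(N)$ initialization overhead) merely make explicit details the paper leaves implicit.
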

\begin{proof}
  The algorithm correctly outputs \(\aname^{\dagger}\) if and only if
  \(N\) is a triangular number.
  All other unrefinable partitions have, as discussed above, a minimal
  excludant \(1 \leq \mu \leq n\), where \(n\) is defined as in the
  algorithm.
  Any such partition is uniquely represented by an unrefinable \ps not
  ending with a \(\star\) and having as prefix the appropriate
  \(\aname^{\mu}\). By Lemma~\ref{lmm:enumerate} these sequences are
  produced by the calls to \textsc{Enumerate} in the main cycle.
  Hence the algorithm correctly enumerates the unrefinable partitions
  of \(N\).
  
  To discuss the runtime, observe that each of the calls to
  \textsc{Enumerate} in the main cycle makes a number of recursive call
  equal to the set $P1$ discussed in Lemma~\ref{lmm:equivalence}.
  Collecting together all the call of all values of \(\mu\), this is
  by definition the number of unrefinable and unsaturated \pss of
  a number $ < N$.
  For any unrefinable partitions of a number $\leq N$, by
  Proposition~\ref{proposition:atmostmu} there are at most $O(\sqrt{N})$ such \pss.
  Hence the process produces at most $O(\sqrt{N})$ recursive calls
  for each unrefinable partitions of a number \(<N\).
  The cost of each recursive call is dominated by the cost of the call
  to \textsc{Update}, hence $O(\sqrt{N})$, plus the possible cost to
  output an actual unrefinable partition of \(N\), when encountered,
  which is again at most $O(\sqrt{N})$.
  
  Therefore the total cost of the \(O(N)\) times the number\footnote{At the time of writing, $U(N)$ is unknown.} of all
  unrefinable partitions of integers less than \( N\).
\end{proof}

An implementation in C++ of the presented  algorithms,  made available online at~\url{https://github.com/MassimoLauria/a179009},
has been used to compute the number of unrefinable partitions of $N$, with $N$ up to 1500. The full list can 
be found \href{https://massimolauria.net/perm/unrefinable_01500.txt}{online}.
Some of the data are also available here 
in Table~\ref{tab:data} (cf.\ also~\cite[\url{https://oeis.org/A179009}]{OEIS}).

\begin{table}[h]
\centering
\begin{tabular}{|rr|rr|}
\hline
$N$ & unref.\ partitions of $N$ & $N$ & unref.\ partitions of $N$ \\
\hline
10 & 1 & 400 & 57725\\
20 & 7 & 500 & 275151\\
30 & 5 & 1000 & 84527031\\
40 & 9 & 1100 & 220124218\\ 
50 & 15 & 1200 & 559471992\\
100 & 104 & 1300 & 1383113838\\
200 & 1616 & 1400 & 3357904448\\
300 & 11801 & 1500 & 7734760269\\
  \hline
\end{tabular}
  \caption{The number of unrefinable partition for some integers up to
    1500. A full list up to 1500 is available at 
    \url{https://massimolauria.net/perm/unrefinable_01500.txt}.}
    \label{tab:data}
\end{table}

\section*{Acknowledgments}
Riccardo Aragona and Roberto Civino are members of INdAM-GNSAGA\@.
Roberto Civino is funded by the Centre of Excellence
ExEMERGE at the University of L'Aquila.
The authors are thankful to the referees for their feedback and for their valuable comments improving the quality of the manuscript.
 
\bibliographystyle{halpha-abbrv}
\bibliography{sym2n_ref}

\end{document}